\numberwithin{equation}{section}
\theoremstyle{plain}
\newtheorem{Theorem}{Theorem}[section]
\newtheorem{Proposition}[Theorem]{Proposition}
\newtheorem{Lemma}[Theorem]{Lemma}
\theoremstyle{definition}
\newtheorem{Example}[Theorem]{Example}
\newtheorem{Definition}[Theorem]{Definition}
\newcounter{FNC}[page]
\def\fauxfootnote#1{{\addtocounter{FNC}{2}\Magenta{$^\fnsymbol{FNC}$}%
     \let\thefootnote\relax\footnotetext{\Magenta{$^\fnsymbol{FNC}$#1}}}}
\newcommand{\Z}{\mathbb Z}
\newcommand{\R}{\mathbb R}
\newcommand{\conv}{\operatorname{conv}}
\newcommand{\w}{\operatorname{w}}
\newcommand{\ls}{\operatorname{ls_\Delta}}
\newcommand{\lss}{\operatorname{ls_\square}}
\title{Lattice Size in Higher Dimension}
\author{Abdulrahman Alajmi}
\address{Mathematics Department\\
        Public Authority for Applied Education and Training\\
        7WG9+MQ, Ardiya 92400, Kuwait}
\email{aalajmi@kent.edu}
\author{Sayok Chakravarty}
\address{Department of Mathematics, Statistics, and Computer Science\\
University of Illinois Chicago\\
851 S. Morgan Street\\ Chicago, IL 60607, USA}
\email{schakr31@uic.edu}
\author{Zachary Kaplan}
\address{New Jersey  Department of Environmental Protection\\
516 East State Street, Trenton, NJ, 08609, USA
}
\email{Zachary.Kaplan@dep.nj.gov}
\author{Jenya Soprunova}
\address{Department of Mathematical Sciences\\
        Kent State University\\
        800 E. Summit st., Kent, OH 44242, USA}
\email{esopruno@kent.edu}
\urladdr{http://www.math.kent.edu/~soprunova/}
\thanks{Work of Chakravarty, Kaplan, and Soprunova was partially supported by NSF Grant DMS-1653002}
\subjclass[2010]{11H06, 52B20, 52C07}
\keywords{Lattice size, lattice width, lattice polytopes}
\begin{document}

\begin{abstract} The lattice size of a lattice polytope is a geometric invariant which was formally introduced in the context of simplification of the defining equation  of an algebraic curve, but  appeared implicitly earlier
in geometric combinatorics. Previous work on the lattice size was devoted to studying the lattice size in dimension 2 and 3.
In this paper we establish explicit formulas for the lattice size of a family of lattice simplices in arbitrary dimension. 
\end{abstract}


\maketitle

\section{Introduction} 
This paper is devoted to computing explicitly the lattice size for a family of lattice simplices in $\R^{d+1}$.
We start with recalling some basic definitions related to lattice polytopes.

We say that a point $p\in\R^d$ is a {\it lattice point} if all of its coordinates are integers. A {\it lattice polytope}
$P\subset\R^d$ is the convex hull of finitely many lattice points in $\Z^d$. A {\it lattice segment} is a segment that connects two
lattice points. Such a segment is {\it primitive} if its only lattice points are its endpoints. The {\it lattice length} of a lattice segment
is one less than the number of lattice points it contains (so that a primitive segment has lattice length one). A lattice polytope is {\it empty} if its only lattice points are its vertices.

We say that matrix $A$ of size $d$ with integer entries is {\it unimodular} if $\det(A)=\pm 1$. The set of such matrices is denoted by ${\rm GL}(d,\Z)$.
We say that a map $L\colon\R^d\to\R^d$ is an {\it affine unimodular map} if it is a composition of multiplication by a unimodular matrix and a translation by an integer vector. Such maps preserve the integer lattice $\Z^d\subset\R^d$. 
We say that two lattice polytopes in $\R^d$ are {\it lattice-equivalent} if one is the image of another under an affine unimodular map.

Let $h$ be an integer vector in $\R^d$. For a lattice polytope $P\subset\R^d$, we define the {\it lattice width of $P$ in the direction of $h$} by
$$\w_h(P)=\max\limits_{x\in P}\langle h, x\rangle - \min\limits_{x\in P}\langle h, x\rangle,
$$
where $\langle h, x\rangle $ is the standard inner product in $\R^d$. Then the {\it lattice width} $\w(P)$ of $P$ is the minimum of $\w_h(P)$ over nonzero integer vectors 
$h\in\Z^d$.

 \begin{figure}[h]
\begin{center}
\includegraphics[scale=.8]{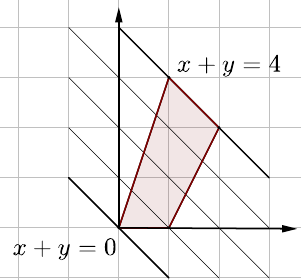} 
\caption{Lattice polygon $P$ with $\operatorname{w}_{(1,1)}(P)=4$.}
\label{F:width-def} 
\end{center}
\end{figure}

In Figure~\ref{F:width-def}, we illustrate the geometric meaning of this definition. Polygon $P$ in the diagram is squeezed between the lines $x+y=0$ and $x+y=4$, so its width in the direction $(1,1)$ is 4.
We also have $\w_{(1,0)}(P)=2$. Since $P$ has interior lattice points, we conclude that $\w(P)=2$.

The lattice size of a lattice polytope is an important geometric invariant of a lattice polytope that was formally introduced in~\cite{CasCools}, but appeared
implicitly earlier in~\cite{Arnold, BarPach, BrownKasp,LagZieg, Schicho}. It was further studied in~\cite{AlajmiSopr, HarSopr, HarSoprTier,Sopr}.

We next reproduce the definition of the lattice size from~\cite{CasCools}.
Let $0\in\R^d$ be the origin and let $(e_1,\dots ,e_d)$ be the standard basis of $\R^d$. The {\it standard simplex} $\Delta\subset\R^d$ is defined by
$\Delta=\conv\{0,e_1,\dots,e_d\}$, where $\conv$ denotes the convex hull operator.

\begin{Definition} Let $P\subset\R^d$ be  a lattice polytope. The lattice size $\ls(P)$ of $P$ with respect to the standard simplex $\Delta$ is the smallest $l$ such that
$L(P)$ is contained in the $l$-dilate of $\Delta$ for some affine unimodular map $L\colon\R^d\to\R^d$.
\end{Definition}
Equivalently, if we let
\begin{align}\label{a:defl_1}
l_1(P)=\max\limits_{(x_1,\dots,x_d)\in P} (x_1+\cdots+x_d)-\min\limits_{(x_1,\dots,x_d)\in P} x_1-\cdots-\min\limits_{(x_1,\dots,x_d)\in P} x_d,
\end{align}
then $\ls(P)$ is the minimum of $l_1(L(P))$ over affine unimodular maps $L\colon\R^d\to\R^d$.

If in the above definition the standard simplex $\Delta$ is replaced with the unit cube $\square=[0,1]^d$, we obtain the definition of the lattice size $\lss(P)$
with respect to the unit cube. Note that the lattice width $\w(P)$ can be viewed as the lattice size with respect to the strip $\R^{d-1}\times [0,1]$.

 \begin{figure}[h]
\begin{center}
\includegraphics[scale=.7]{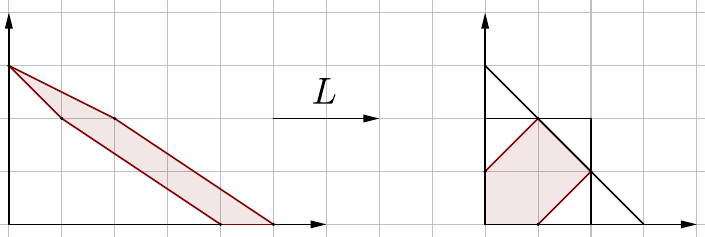} 
\caption{}
\label{F:ls_example} 
\end{center}
\end{figure}

 \begin{Example} 
  Let $P$ be the polygon with  vertices $(4,0), (5,0), (2,2), (0,3)$, and $(1,2)$, as drawn in Figure~\ref{F:ls_example}. Define
 $L(x,y)=\begin{bmatrix}1&1\\-1&-2\end{bmatrix}\cdot \begin{bmatrix}x\\ y\end{bmatrix}+\begin{bmatrix}-3\\ 6\end{bmatrix}$. We get
 $$L(P)=\conv\{(1,2), (2,1), (1,0), (0,0), (0,1)\},$$ so  $L(P)\subset 2\square$ and  $L(P)\subset 3\Delta$. Note that $P$
 has an interior lattice point, while $\square$ and $2\Delta$ do not, so  it is impossible to unimodularly map $P$ inside $\square$ or $2\Delta$.
 We conclude that $\lss(P)=2$ and $\ls(P)=3$.
 \end{Example}

It was shown in~\cite{HarSopr, HarSoprTier} that in dimension 2 both $\ls(P)$ and $\lss(P)$ can be computed using basis reduction (see~\cite{HarSopr, HarSoprTier} for definitions and details). It is further explained in~\cite{HarSopr} that basis reduction also computes the lattice size $\lss(P)$ in dimension 3.
This leads to fast algorithms for computing the lattice size in these cases. A counterexample in~\cite{HarSopr} demonstrates that a reduced basis does not necessarily compute $\ls(P)$ in dimension 3.

A well-known classification result by White~\cite{White} asserts that up to lattice equivalence empty lattice tetrahedra  in $\R^3$ are of the form
$$T_{pq}=\begin{bmatrix}
1&0&0&p\\
0&1&0&q\\
0&0&1&1
\end{bmatrix},
$$ 
where $p$ and $q$ are nonnegative relatively prime integers.  (Note that in this notation $T_{pq}$ is the convex hull of the column vectors of the matrix.)
It was further shown in~\cite{Scarf} that any empty lattice polytope in $\R^3$ has lattice width one.

While it is not true that a reduced basis computes $\ls(P)$ for $P\subset\R^3$, it was shown in~\cite{AlajmiSopr} that this is the case for 3-dimensional empty lattice polytopes. A counterexample was provided in~\cite{AlajmiSopr}  demonstrating that the conclusion does not generalize to all lattice width one polytopes 
$P\subset\R^3$.

All the results discussed above concern the lattice size of lattice polytopes in $\R^2$ and $\R^3$. In this paper, we consider a family of lattice simplices $P$ in 
$\R^{d+1}$ for arbitrary $d$ and explicitly compute both $\ls(P)$ and $\lss(P)$ under some assumptions on the parameters of the family. We work with simplices of the form 
$$T_{p_1\dots p_d}=\begin{bmatrix}
1&0&\dots&0&0&p_1\\
0&1&\dots&0&0&p_2\\
\vdots&  &\ddots&\vdots&\vdots&\vdots\\
0&0&\dots&1&0&p_d\\
0&0&\dots&0&1&1
\end{bmatrix},
$$
where $p_1,\dots, p_d$ are nonnegative integers. These simplices are a natural $(d+1)$-dimensional generalization of the tetrahedra $T_{pq}\subset\R^3$. Each such $T_{p_1\dots p_d}$ has lattice width one. Also, it is empty if and only if
$\gcd(p_1,\dots,p_d)=1$, although we will not be making this assumption. Note that starting with dimension 4  it is no longer true that every empty lattice simplex has lattice width one, see~\cite{HaaseZieg}.

Our main results are formulated in Theorems~\ref{T:Main1} and~\ref{T:Main2}, where we provide explicit formulas for both $\ls(T_{p_1\dots p_d})$ and  $\lss(T_{p_1\dots p_d})$ in terms of $p_1,\dots, p_d$ under some restrictions on these parameters. Our methods are elementary and different from the ones used in earlier work in dimension 2 and 3.

\section{First Lemmas}
Here we provide the $d$-dimensional version of the introductory statements about the lattice size, which were formulated in~\cite{AlajmiSopr} for the case $d=3$.

Let $P\subset\R^d$ be a lattice polytope and $A\in{\rm GL}(d,\Z)$. Denote the rows of $A$ by $h_1,\dots, h_d$. Recall the definition of $l_1(P)$ in~(\ref{a:defl_1}).

\begin{Lemma}\label{L:IntroLemma1}
\begin{itemize} 
\item[(1)] For any $h\in \Z^d$ we have $\w_{h}(AP)=\w_{A^{T}h}(P)$.
\item[(2)] For $i=1,\dots, d$ we have $\w_{e_i}(AP)=\w_{h_i}(P)$.
\item[(3)] Let $(e_1,\dots,e_d)$ be the standard basis of $\R^d$. Then $\w_{e_i}(P)\leq l_1(P)$ for $i=1,\dots, d$. 
\item[(4)] For $i=1,\dots, d$ we have $\w_{h_i}(P)\leq l_1(AP)$.
\item[(5)] Let $e\in\Z^d$ be a vector whose entries lie  in $\{0,1\}$. Then $\w_{e}(P)\leq l_1(P)$.
\item[(6)] Let $h$ be the sum of any non-empty collection of rows of $A$. Then $\w_{h}(P)\leq l_1(AP)$.
\end{itemize}
\end{Lemma}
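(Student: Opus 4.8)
The plan is to prove the six parts in a dependency order so that only two genuine computations remain: the transpose identity in part~(1) and a single elementary inequality. Parts~(2), (4), and (6) then drop out as formal consequences, and the whole lemma reduces to careful bookkeeping around the definitions of $\w_h$ and $l_1$.

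First I would establish (1) by a change of variables. Writing a generic point of $AP$ as $Ax$ with $x\in P$ and using the adjointness $\langle h, Ax\rangle=\langle A^{T}h, x\rangle$, the maximum and minimum defining $\w_h(AP)$ become the maximum and minimum of the linear functional $x\mapsto\langle A^{T}h,x\rangle$ over $P$, which is precisely $\w_{A^{T}h}(P)$. Part~(2) is then the special case $h=e_i$: since $A^{T}e_i$ is the $i$-th row $h_i$ of $A$, we get $\w_{e_i}(AP)=\w_{h_i}(P)$.

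The only step that requires an actual argument is the inequality in (5), which contains (3) as the singleton case. Fix a nonempty index set $S\subseteq\{1,\dots,d\}$ with $0$-$1$ indicator vector $e$. To bound $\max_{x\in P}\sum_{i\in S}x_i$ from above, I would choose a point $x^{*}$ attaining this maximum and write $\sum_{i\in S}x_i^{*}=\sum_{j=1}^{d}x_j^{*}-\sum_{j\notin S}x_j^{*}$; bounding the full sum by $\max_{x\in P}(x_1+\cdots+x_d)$ and each discarded coordinate $-x_j^{*}$ by $-\min_{x\in P}x_j$ yields the desired upper estimate. Symmetrically, $\min_{x\in P}\sum_{i\in S}x_i\ge\sum_{i\in S}\min_{x\in P}x_i$ by bounding each coordinate below by its own minimum. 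Subtracting, the leftover minima reassemble into exactly $l_1(P)$, giving $\w_e(P)\le l_1(P)$; the case $e=0$ holds because $l_1(P)\ge 0$. Taking $S=\{i\}$ recovers (3).

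Finally, (4) and (6) follow by transport through the unimodular matrix $A$. For (6), given $h=\sum_{i\in S}h_i$ let $e$ be the indicator of $S$; since $A^{T}e=\sum_{i\in S}h_i=h$, part~(1) gives $\w_h(P)=\w_e(AP)$, and applying (5) to the polytope $AP$ yields $\w_e(AP)\le l_1(AP)$. Part~(4) is the singleton specialization, equivalently the combination of (2) with (3) applied to $AP$. I expect no serious obstacle: the only place to be careful is keeping the split of the coordinate sum in (5) consistent, so that the discarded minima telescope exactly into $l_1(P)$ rather than leaving a stray term.
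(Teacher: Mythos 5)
Your proposal is correct, and its architecture matches the paper's: part (1) via the adjunction $\langle h, Ax\rangle=\langle A^{T}h,x\rangle$, part (2) as the case $h=e_i$, and parts (4), (6) obtained by transporting (3), (5) to the polytope $AP$ through part (1). The one genuine divergence is in the proof of the only substantive inequality, (5) (with (3) as its singleton case). The paper argues geometrically: writing $l_1=l_1(P)$, it uses the containment $P\subset l_1\Delta$ together with monotonicity of width under inclusion, so that $\w_e(P)\leq \w_e(l_1\Delta)=l_1$ for a nonzero 0-1 vector $e$. You instead prove the inequality by direct coordinate bookkeeping: at a maximizer $x^{*}$ you split $\sum_{i\in S}x_i^{*}=\sum_{j=1}^{d}x_j^{*}-\sum_{j\notin S}x_j^{*}$, bound the discarded coordinates by their minima, bound the minimum of the partial sum below by the sum of coordinate minima, and observe that the leftover terms reassemble into exactly $l_1(P)$. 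Your route is more elementary and self-contained; in particular it sidesteps a small subtlety the paper glosses over, namely that the containment $P\subset l_1(P)\Delta$ only holds after translating $P$ so that its coordinate minima vanish (harmless, since width and $l_1$ are translation-invariant, but left implicit). The paper's route is shorter and leans on the geometric meaning of $l_1$ as the dilation factor of the simplex. You also explicitly handle the degenerate case $e=0$, which the statement of (5) permits and the paper treats only implicitly. Both arguments are complete.
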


\begin{proof}
For (1) we have
$$\w_h(AP)=\max\limits_{x\in P}\langle h,Ax\rangle- \min\limits_{x\in P}\langle h,Ax\rangle=\max\limits_{x\in P}\langle A^Th,x\rangle- \min\limits_{x\in P}\langle A^Th,x\rangle=\w_{A^Th}(P),
$$
and (2) is a particular case of (1).  

To check (3), denote $l_1:=l_1(P)$. Then $P\subset l_1\Delta$ and hence 
$$\w_{e_i}(P)\leq \w_{e_i}(l_1\Delta)=l_1=l_1(P).$$
Then (4) follows as $\w_{h_i}(P)=\w_{e_i}(AP)\leq l_1(AP).$

Next we check (5), which is similar to (3): $\w_{e}(P)\leq \w_{e}(l_1\Delta)=l_1=l_1(P).$

For (6) let $e$ be the sum of the corresponding standard basis vectors. Then by (1) and~(5)
$$\w_{h}(P)=\w_{e}(AP)\leq l_1(AP).$$
\end{proof}

\begin{Lemma}\label{L:IntroLemma2} We have
\begin{itemize} 
\item[(1)] 
$l_1(AP)=\max_{x\in P}\langle h_1+\cdots+h_d,x\rangle-\min_{x\in P}\langle h_1,x\rangle-\cdots-\min_{x\in P}\langle h_d,x\rangle$;
\item[(2)] $l_1(AP)$ does not depend on the order of rows in $A$;
\item[(3)] $
l_1(AP)=l_1(BP)$, where $B=\left(\begin{matrix}h_1\\ \vdots\\h_{d-1}\\
-\sum_{i=1}^d h_i
\end{matrix}\right).
$
\end{itemize}
\end{Lemma}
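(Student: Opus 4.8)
The plan is to prove the three parts of Lemma~\ref{L:IntroLemma2} directly from the definition of $l_1$ in~\eqref{a:defl_1}, using part~(1) of the lemma as the main computational tool for the remaining two parts.

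\medskip

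For part~(1), I would start by applying the definition of $l_1$ to the polytope $AP$. Writing a point of $AP$ as $Ax$ for $x\in P$, the coordinates of $Ax$ are precisely $\langle h_i,x\rangle$ for $i=1,\dots,d$, since $h_1,\dots,h_d$ are the rows of $A$. The sum of the coordinates of $Ax$ is therefore $\langle h_1+\cdots+h_d,x\rangle$. Substituting these expressions into~\eqref{a:defl_1} — where $l_1(AP)$ is the maximum over $AP$ of the coordinate sum minus the sum of the coordinatewise minima over $AP$ — and re-indexing each maximum and minimum over $AP$ as the corresponding max/min over $P$ of the inner products, yields the claimed formula. This is essentially a bookkeeping computation, and I expect it to be routine.

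\medskip

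Part~(2) should follow immediately from part~(1): the right-hand side of the formula in~(1) is manifestly symmetric in $h_1,\dots,h_d$, since it consists of a term $\max_{x\in P}\langle h_1+\cdots+h_d,x\rangle$ that depends only on the unordered sum, together with a sum $\sum_i \min_{x\in P}\langle h_i,x\rangle$ of terms indexed by the rows. Permuting the rows of $A$ only permutes the summands and reorders the sum inside the single max, leaving the whole expression unchanged.

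\medskip

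For part~(3), I would again use the formula from part~(1), now comparing $l_1(AP)$ with $l_1(BP)$ where $B$ is obtained from $A$ by replacing the last row $h_d$ with $-\sum_{i=1}^d h_i$. The key observation is that the sum of the rows of $B$ equals $h_1+\cdots+h_{d-1}+\bigl(-\sum_{i=1}^d h_i\bigr)=-h_d$, so the leading max-term becomes $\max_{x\in P}\langle -h_d,x\rangle=-\min_{x\in P}\langle h_d,x\rangle$. Meanwhile the subtracted min-terms for the first $d-1$ rows are unchanged, and the new last row contributes $-\min_{x\in P}\langle -h_d,x\rangle=\max_{x\in P}\langle h_d,x\rangle$. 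I expect the main (though still modest) obstacle here to be verifying that these algebraic identities $\max_x\langle -h,x\rangle=-\min_x\langle h,x\rangle$ combine so that every term matches: tracking the signs and confirming that the $h_d$ contributions in the $A$-expression (namely $\langle h_d,x\rangle$ inside the leading max and $-\min_x\langle h_d,x\rangle$) cancel against and reproduce the corresponding $B$-expression terms. A careful side-by-side comparison of the two applications of the formula from~(1) should show the expressions are identical, completing the proof.
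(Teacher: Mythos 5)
Your overall strategy is exactly the paper's: parts (1) and (2) are routine verifications from the definition (the paper dismisses them as ``clear''), and part (3) is derived by applying the formula of part (1) to $B$ and matching terms against $l_1(AP)$. However, your part (3) contains a concrete miscomputation which, taken literally, breaks the term-matching. The new last row of $B$ is $-\sum_{i=1}^d h_i$, not $-h_d$, so its contribution to the formula of part (1) is
$$-\min_{x\in P}\Bigl\langle -\textstyle\sum_{i=1}^d h_i,\, x\Bigr\rangle \;=\; \max_{x\in P}\langle h_1+\cdots+h_d,\, x\rangle,$$
i.e.\ it reproduces the \emph{leading max-term} of $l_1(AP)$ --- not $\max_{x\in P}\langle h_d,x\rangle$ as you wrote. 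With your claimed terms, $l_1(BP)$ would total $\max_{x\in P}\langle h_d,x\rangle-\sum_{i=1}^{d}\min_{x\in P}\langle h_i,x\rangle$, which differs from $l_1(AP)$ in general (the two leading maxima $\max_x\langle h_d,x\rangle$ and $\max_x\langle h_1+\cdots+h_d,x\rangle$ need not agree), so the promised ``side-by-side comparison'' would not close.

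The correct bookkeeping is a swap of roles between two terms: $B$'s leading max-term $\max_{x\in P}\langle -h_d,x\rangle=-\min_{x\in P}\langle h_d,x\rangle$ matches the $i=d$ min-term of $l_1(AP)$ --- this half you have right --- while $B$'s last-row min-term turns into $A$'s leading max-term as displayed above. Once that single term is corrected, every term of $l_1(BP)$ matches a term of $l_1(AP)$ and your argument coincides with the paper's proof, so this is a local slip rather than a wrong approach; but as written the step fails.
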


\begin{proof} (1) and (2) are clear.
Let's check (3) using (1):
\begin{align*}
l_1\left(BP\right)&=\max\limits_{x\in P}\langle-h_d,x\rangle
-\sum_{i=1}^{d-1}\min\limits_{x\in P}\langle h_i,x\rangle
-\min\limits_{x\in P}\langle(-h_1-\cdots-h_d),x\rangle
=l_1(AP).
\end{align*}

\end{proof}

\section{Lattice size computation}
Recall that 
$T_{p_1\dots p_d}=\conv\{e_1,\dots, e_{d+1}, (p_1,\dots, p_d,1)\}\subset\R^{d+1}, 
$
where $p_i\in\Z_{\geq 0}$. Define $\alpha=p_1+\cdots+p_{d-1}$. Assume that $p_d\geq 2$ and let $k=\lfloor\frac{p_d-2}{\alpha+1}\rfloor$.

\begin{Proposition}\label{P: upper_bound} With $p_i$, $\alpha$, and $k$  defined as above, suppose that $p_d\geq \alpha^2-\alpha$. Then
$\ls(T_{p_1 \dots p_d})\leq k+3$.
\end{Proposition}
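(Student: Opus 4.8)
The plan is to prove the bound by producing a single affine unimodular map that places $T:=T_{p_1\dots p_d}$ inside $(k+3)\Delta$. Equivalently, using \rl{IntroLemma2}(1), I will exhibit a matrix $A\in{\rm GL}(d+1,\Z)$ with rows $h_1,\dots,h_{d+1}$ for which
$$l_1(AT)=\max_{x}\langle h_1+\cdots+h_{d+1},x\rangle-\sum_{i=1}^{d+1}\min_{x}\langle h_i,x\rangle\le k+3,$$
where every extremum is attained at one of the $d+2$ vertices $e_1,\dots,e_{d+1}$ and $v=(p_1,\dots,p_d,1)$ of $T$. Thus the entire argument reduces to a finite computation of inner products at these vertices. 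Before constructing $A$ I would record the one structural consequence of the hypothesis that the proof actually uses: since $p_d\ge\alpha^2-\alpha$ we have $p_d-2\ge(\alpha-2)(\alpha+1)$, so dividing by $\alpha+1$ and taking the floor gives $k\ge\alpha-2$, that is $\alpha\le k+2$. This inequality is precisely what lets the error terms below, which are of size $O(\alpha)$, be absorbed into the additive constant $3$.

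The engine of the construction is the $0$--$1$ functional $f=e_1+\cdots+e_{d-1}+e_{d+1}$, which satisfies $\langle f,v\rangle=\alpha+1$ while $\langle f,e_i\rangle\in\{0,1\}$. Replacing the coordinate functional $e_d$ by the sheared functional $g=e_d-mf$ keeps unimodularity (it is an elementary row operation) and folds the large value $\langle e_d,v\rangle=p_d$ down to $\langle g,v\rangle=p_d-m(\alpha+1)$. Choosing $m$ near $k$ reduces this to the residue $s:=p_d-k(\alpha+1)$, which by the definition of $k$ lies in the narrow window $2\le s\le\alpha+2$. By \rl{IntroLemma1}(4) every row of a good $A$ must have lattice width at most $l_1(AT)$ on $T$, so I would first check that the fold direction meets this necessary condition: evaluating $g$ at the vertices gives the values $\{-m,\,1,\,p_d-m(\alpha+1)\}$, and with $m=k+1$ one finds $\w_g(T)=k+2$, using $\alpha\le k+2$ to control the negative end. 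Hence $g$ is admissible as a row.

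The main obstacle is that this fold alone does not suffice. If I complete $A$ with the rows $e_1,\dots,e_{d-1},e_{d+1}$, then the image of $v$ still carries the first $d-1$ coordinates $p_1,\dots,p_{d-1}$, its coordinate sum is about $s$, and combined with the term $-\min\langle g,\cdot\rangle\approx k$ the value of $l_1(AT)$ overshoots $k+3$ exactly in the intermediate range of $s$ (one checks, for instance, that $m=k+1$ yields $l_1(AT)=s+k+1$, which is acceptable only when $s=2$). The fix, which is the heart of the argument, is to rebalance: instead of keeping the remaining rows equal to the $e_i$, one tilts the transformation, equivalently composes with an integer shear of the level-$1$ slice together with a $\mathrm{GL}$-adjustment of the first $d-1$ coordinates, so that the single large entry is distributed and each of the $d+2$ vertex images becomes nonnegative with coordinate sum at most $k+3$. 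I expect the bookkeeping here, carried out by cases on $s$ (or via a partial, continued-fraction-type reduction of $p_d$ against $\alpha+1$), to be the technical crux; in every case the verification ultimately comes down to the inequality $\alpha\le k+2$ guaranteeing that the leftover terms fit under the far facet $x_1+\cdots+x_{d+1}=k+3$ of $(k+3)\Delta$.
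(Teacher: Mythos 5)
Your setup is sound and your preliminary computations are all correct: the reduction to exhibiting one unimodular matrix $A$ with $l_1(AT)\le k+3$, the derivation of $\alpha\le k+2$ from $p_d\ge\alpha^2-\alpha$, the window $2\le s\le\alpha+2$ for the residue $s=p_d-k(\alpha+1)$, the verification that $\w_g(T)=k+2$ for $g=e_d-(k+1)f$, and the observation that completing $g$ with the rows $e_1,\dots,e_{d-1},e_{d+1}$ gives $l_1(AT)=s+k+1$, which overshoots. But the argument stops exactly where the real work begins: the ``rebalancing'' that is supposed to repair the overshoot is described only in vague terms (``tilts the transformation,'' ``the single large entry is distributed,'' ``bookkeeping\dots carried out by cases on $s$''), and no matrix is ever written down and no case is ever checked. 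Since the entire content of the proposition is the existence of a specific affine unimodular map, an unexecuted plan for constructing it is a genuine gap, not a deferred detail; as it stands you have proved only that one natural candidate fails.

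For comparison, the paper's proof consists of a single explicit matrix, and its trick is different from (and simpler than) the distribution scheme you outline: keep the rows $e_1,\dots,e_{d-1},e_{d+1}$, drop $e_d$ entirely, and take as the last row
$$h=(k+1,\dots,k+1,\,-1,\;p_d-\alpha(k+1)-1),$$
i.e.\ put coefficient $-1$ (not $+1$) on the $d$th coordinate and park the residue in the $e_{d+1}$ slot. The sign flip is precisely what kills the overshoot you found: the image of the long vertex $v=(p_1,\dots,p_d,1)$ becomes $(p_1,\dots,p_{d-1},1,-1)$, so the huge entry $p_d$ cancels completely and $v$ contributes coordinate sum only $\alpha\le k+2$, while the residue appears only in the image of $e_{d+1}$, namely $(0,\dots,0,1,p_d-\alpha(k+1)-1)$, where it is never added to anything of size $k$. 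One then checks $\max\{k+2,\,p_d-\alpha(k+1),\,\alpha\}=k+2$ and $\min\{-1,\,p_d-\alpha(k+1)-1\}=-1$ (this last inequality is where the hypothesis $p_d\ge\alpha^2-\alpha$ enters a second time), giving $l_1(AT)=k+3$ exactly, with no case analysis on $s$ at all.
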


\begin{proof} 
Consider unimodular matrix $A$ of size $d+1$ defined by
$$A=\left(\begin{matrix}
1&0&\dots&0&0&0\\
0&1&\dots&0&0&0\\
\vdots&\vdots &\ddots&\vdots&\vdots&\vdots\\
0&0&\dots&1&0&0\\
0&0&\dots&0&0&1\\
k+1&k+1&\dots&k+1&-1&p_d-\alpha(k+1)-1
\end{matrix}\right).
$$ 
Then 
$$AT_{p_1 \dots p_d}=\begin{bmatrix}
1&0&\dots&0&0&p_1\\
0&1&\dots&0&0&p_2\\
\vdots&\vdots &\ddots&\vdots&\vdots&\vdots\\
0&0&\dots&1&0&p_{d-1}\\
0&0&\dots&0&1&1\\
k+1&k+1&\dots&k+1&p_d-\alpha(k+1)-1&-1
\end{bmatrix},
$$ 
and hence 
$$l_1(AT_{p_1\dots p_d})=\max\{k+2,p_d-\alpha(k+1),\alpha\}-\min\{-1,p_d-\alpha(k+1)-1\}.$$
Since $k=\lfloor\frac{p_d-2}{\alpha+1}\rfloor$, we have $k+1>\frac{p_d-2}{\alpha+1}$, which implies $p_d-\alpha(k+1)< k+3$ and, since $p_d, k$ and $\alpha$ are integers, we 
conclude $p_d-\alpha(k+1)\leq k+2$. We also have
$$k+2>\frac{p_d-2}{\alpha+1}+1=\frac{p_d+\alpha-1}{\alpha+1}\geq \frac{\alpha^2-\alpha+\alpha-1}{\alpha+1}=\alpha-1,
$$
where  we used the assumption $p_d\geq \alpha^2-\alpha$. We have checked that $\max\{k+2,p_d-\alpha(k+1),\alpha\}=k+2$.

Next note that since $\frac{p_d-2}{\alpha+1}\geq k$ we get
$$p_d-1-\alpha(k+1)\geq p_d-1-\alpha\left(\frac{p_d-2}{\alpha+1}+1\right)=\frac{-\alpha^2-1+p_d}{\alpha+1}\geq\frac{-\alpha^2-1+\alpha^2-\alpha}{\alpha+1}=-1.
$$
Hence $\min\{-1,p_d-\alpha(k+1)-1\}=-1$ and $l_1(AT_{p_1 \dots p_d})=k+3$, which implies $\ls(T_{p_1 \dots p_d})\leq k+3$.
\end{proof}

Our next goal is to show that under our assumptions we have $\ls(T_{p_1\dots p_d})=k+3$. For this, we first prove two lemmas.

\begin{Lemma}\label{L:Lemma1} Let $h=(a_1,\dots, a_{d+1})\in\R^{d+1}$ be a primitive vector with $\w_{h}(T_{p_1\dots p_d})\leq k+2$.
Then $a_d\in\{0,1,-1\}$.
\end{Lemma}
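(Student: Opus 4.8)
The plan is to exploit that $T_{p_1\dots p_d}$ is a simplex, so $\w_h$ is simply the spread of the values of $\langle h,\cdot\rangle$ over its $d+2$ vertices $e_1,\dots,e_{d+1}$ and $v=(p_1,\dots,p_d,1)$. These values are $a_1,\dots,a_{d+1}$ together with $V:=\langle h,v\rangle=\sum_{i=1}^d a_ip_i+a_{d+1}$, and the hypothesis $\w_h(T_{p_1\dots p_d})\le k+2$ says they all lie in a common interval of length $k+2$. From this I would extract only two consequences: comparing $V$ with $a_{d+1}$ gives $\bigl|\sum_{i=1}^d a_ip_i\bigr|\le k+2$, and comparing $a_i$ with $a_d$ gives $|a_i-a_d|\le k+2$ for each $i=1,\dots,d-1$.

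The heart of the argument is to isolate the large parameter $p_d$. Writing $\sum_{i=1}^d a_ip_i=a_d(\alpha+p_d)+\sum_{i=1}^{d-1}(a_i-a_d)p_i$ (using $\sum_{i=1}^{d-1}p_i=\alpha$) and applying the triangle inequality with the two bounds above yields
$$|a_d|\,(\alpha+p_d)\le (k+2)+\sum_{i=1}^{d-1}|a_i-a_d|\,p_i\le (k+2)+(k+2)\alpha=(k+2)(\alpha+1).$$
The point of measuring the other coordinates relative to $a_d$, rather than bounding each $|a_i|$ separately, is that the coefficient of $|a_d|$ becomes the full $\alpha+p_d$ instead of just $p_d$; this is exactly what makes the final estimate strong enough.

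It then remains to convert this into $|a_d|<2$. Here I would invoke the definition of $k$: from $k=\lfloor\frac{p_d-2}{\alpha+1}\rfloor$ we get $k(\alpha+1)\le p_d-2$, hence $(k+2)(\alpha+1)\le p_d+2\alpha$. Combining with the previous display gives $|a_d|\le\frac{p_d+2\alpha}{p_d+\alpha}$, and since $p_d\ge 2$ this ratio is strictly less than $2$. As $a_d$ is an integer, we conclude $a_d\in\{0,1,-1\}$.

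I expect the only genuine obstacle to be calibrating the estimate finely enough to beat the integer threshold $2$: a cruder inequality that divides by $p_d$ alone is too weak in boundary cases (for instance small $\alpha$), so both refinements above, namely centering the $a_i$ at $a_d$ and using the floor bound $k(\alpha+1)\le p_d-2$, are essential. It is worth noting that this argument uses only $p_d\ge 2$ and not the stronger hypothesis $p_d\ge\alpha^2-\alpha$ of the preceding proposition.
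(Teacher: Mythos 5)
Your proof is correct and is essentially the paper's argument in different packaging: you extract the same two consequences of the width bound (comparing $\langle h,v\rangle$ with $a_{d+1}$, and each $a_i$ with $a_d$) and use the same key inequality $k(\alpha+1)\le p_d-2$. The only difference is cosmetic --- the paper assumes $a_d\ge 2$ and derives the contradiction $2p_d\le p_d$, while you keep $a_d$ symbolic and conclude $|a_d|<2$ directly; the underlying algebra, including your centered decomposition $\sum_{i=1}^d a_ip_i=a_d(\alpha+p_d)+\sum_{i=1}^{d-1}(a_i-a_d)p_i$, matches the paper's estimates line for line.
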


\begin{proof} We can assume $a_d\geq 0$. Suppose that $a_d\geq 2$. We have
$$\max\{a_1,\dots,a_{d+1},a_1p_1+\cdots +a_dp_d+a_{d+1}\}-\min\{a_1,\dots,a_{d+1},a_1p_1+\cdots +a_dp_d+a_{d+1}\}\leq k+2,
$$
which implies $a_1p_1+\cdots +a_dp_d\le k+2$
and $a_d-a_i\leq k+2$ for $i=1,\dots,d-1$.
Since $a_d\geq 2$, the first inequality  implies
$$a_1p_1+\cdots +a_{d-1}p_{d-1}\leq k+2-2p_d.
$$
The second inequality implies $a_i\geq -k$ for $i=1,\dots,d-1$ and  hence
$$a_1p_1+\cdots +a_{d-1}p_{d-1}\geq -k(p_1+\cdots+p_{d-1})=-k\alpha.
$$
Combining this with what we got from the first inequality, we get
$k+2-2p_d\geq -k\alpha.$
Hence, using the definition of $k$, we get
$$2p_d\leq k+2+k\alpha=k(\alpha+1)+2\leq p_d,
$$
  which contradicts the assumption $p_d>0$.
\end{proof}

\begin{Lemma}\label{L:Lemma2} Let $h=(a_1,\dots, a_{d+1})\in\R^{d+1}$ be a primitive vector with $a_d=\pm 1$ and  $\w_{h}(T_{p_1\dots p_d})\leq  k+2$.
Then $\w_{h}(T_{p_1\dots p_d})= k+2$.
\end{Lemma}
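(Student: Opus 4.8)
The plan is to establish the reverse inequality $\w_h(T_{p_1\dots p_d})\geq k+2$, which together with the hypothesis $\w_h(T_{p_1\dots p_d})\leq k+2$ gives the claimed equality. Since $\w_{-h}=\w_h$ and $-h$ has $d$-th coordinate $-a_d$, I may replace $h$ by $-h$ if necessary and assume $a_d=1$. I would then argue by contradiction, supposing $\w_h(T_{p_1\dots p_d})\leq k+1$ and aiming to violate the inequality $k(\alpha+1)\leq p_d-2$ that comes from $k=\lfloor\frac{p_d-2}{\alpha+1}\rfloor$.

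Recall $T_{p_1\dots p_d}=\conv\{e_1,\dots,e_{d+1},v\}$ with $v=(p_1,\dots,p_d,1)$, so the $d+2$ values whose spread is $\w_h$ are $a_1,\dots,a_{d+1}$ and $S:=\langle h,v\rangle=a_1p_1+\cdots+a_{d-1}p_{d-1}+p_d+a_{d+1}$, where I have used $a_d=1$. Write $m$ and $M$ for the minimum and maximum of these numbers, so $\w_h=M-m$. The first key point is that $a_d=1$ is among the values, giving $m\leq 1\leq M$ and hence, under the contradiction hypothesis, $m\geq M-(k+1)\geq 1-(k+1)=-k$.

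The core estimate is a lower bound for $S$ expressed purely in terms of $m$. Since $a_i\geq m$ and $p_i\geq 0$ we have $a_ip_i\geq mp_i$; summing over $i=1,\dots,d-1$ and adding $a_{d+1}\geq m$ and the term $p_d$ yields
\[
S\geq m(p_1+\cdots+p_{d-1})+m+p_d=m(\alpha+1)+p_d.
\]
Combining this with $S\leq M\leq m+(k+1)$ and solving for $p_d$ gives $p_d\leq k+1-m\alpha$, and then $m\geq-k$ and $\alpha\geq0$ force $p_d\leq k(\alpha+1)+1$. This contradicts $k(\alpha+1)\leq p_d-2$, which finishes the argument.

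I expect the only real obstacle to be choosing the estimate carefully. The naive bounds $a_i\geq-k$ and $a_{d+1}\geq-k$ used in the proof of Lemma~\ref{L:Lemma1} lead only to $p_d\leq k(\alpha+1)+k+1$, which is consistent with the definition of $k$ and produces no contradiction. The improvement comes from keeping the unknown minimum $m$ inside the bound for $S$ and substituting $M\leq m+(k+1)$ only at the last step, which sharpens the estimate by exactly the needed amount. It is worth noting that this reasoning needs only $p_d\geq2$ and $p_i\geq0$, not the stronger hypothesis $p_d\geq\alpha^2-\alpha$ of Proposition~\ref{P: upper_bound}.
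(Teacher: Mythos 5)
Your proof is correct and follows essentially the same route as the paper's: assume $a_d=1$, suppose $\w_h(T_{p_1\dots p_d})\leq k+1$, derive $p_d\leq k(\alpha+1)+1$, and contradict $k(\alpha+1)\leq p_d-2$. The paper extracts the two specific inequalities $S-a_{d+1}\leq k+1$ and $1-a_i\leq k+1$ (so $a_i\geq -k$) where you carry the symbolic minimum $m$ and substitute $m\geq -k$ at the end, but this is only a bookkeeping difference; the estimates are identical.
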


\begin{proof}
We can assume that $a_d=1$. Suppose that 
\begin{align*}
&\max\{a_1,\dots,a_{d-1},1, a_{d+1},a_1p_1+\cdots +a_{d-1}p_{d-1}+p_d+a_{d+1}\}\\
&-\min\{a_1,\dots,a_{d-1},1,a_{d+1},a_1p_1+\cdots +a_{d-1}p_{d-1}+p_d+a_{d+1}\}\leq k+1.
\end{align*}
This implies $a_1p_1+\cdots+a_{d-1}p_{d-1}+p_d\leq k+1$ and $1-a_i\leq k+1$ for $i=1,\dots, d-1$.
Hence for such $i$ we have $a_i\geq -k$ and
$$-k\alpha+p_d\leq a_1p_1+\cdots+a_{d-1}p_{d-1}+p_d\leq k+1,
$$
which implies $-k\alpha+p_d\leq k+1$.
Hence  $p_d\leq k\alpha+k+1=k(\alpha+1)+1\leq p_d-1$,
and this  is impossible.
\end{proof}

\begin{Theorem}\label{T:Main1} 
Let $\alpha=p_1+\cdots+p_{d-1}$, where all $p_i$ are positive and $p_d\geq 2$. Define $k=\lfloor\frac{p_d-2}{\alpha+1}\rfloor$.
Suppose that $p_d\geq \alpha^2-\alpha$. Then $\ls(T_{p_1\dots p_d})=k+3$.
\end{Theorem}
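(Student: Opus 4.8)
The upper bound $\ls(T_{p_1\dots p_d})\le k+3$ is already supplied by Proposition~\ref{P: upper_bound}, so the entire task is the matching lower bound $\ls(T_{p_1\dots p_d})\ge k+3$. Since $l_1(AT_{p_1\dots p_d})$ is a nonnegative integer for every $A\in{\rm GL}(d+1,\Z)$ (a translation does not affect $l_1$), it suffices to rule out the existence of a unimodular $A$ with $l_1(AT_{p_1\dots p_d})\le k+2$. The plan is to assume such an $A$ exists, with rows $h_1,\dots,h_{d+1}$, and to extract enough rigid information from \rl{Lemma1} and \rl{Lemma2} to force $l_1(AT_{p_1\dots p_d})$ strictly above $k+2$, a contradiction.

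First I would record the structural consequences of this assumption. By \rl{IntroLemma1}(4) each row satisfies $\w_{h_i}(T_{p_1\dots p_d})\le k+2$, and by \rl{IntroLemma1}(6) every nonempty sum of rows has width at most $k+2$ as well; in particular $h_0:=-\sum_{i=1}^{d+1}h_i$ satisfies $\w_{h_0}(T_{p_1\dots p_d})\le k+2$. The rows $h_1,\dots,h_{d+1}$ are primitive, being rows of a unimodular matrix, so \rl{Lemma1} places their $d$th coordinates in $\{-1,0,1\}$. Because the rows form a $\Z$-basis of $\Z^{d+1}$, their $d$th coordinates generate $\Z$ and so are not all $0$; hence at least one row, say $h_r$, has $d$th coordinate $\pm1$. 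Any direction whose $d$th coordinate is $\pm1$ is automatically primitive, so \rl{Lemma2} applies and gives $\w_{h_r}(T_{p_1\dots p_d})=k+2$ exactly.

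The key reduction uses Lemma~\ref{L:IntroLemma2}. Writing $m_i=\min_{x\in T_{p_1\dots p_d}}\langle h_i,x\rangle$ and substituting $\sum_{i=1}^{d+1}h_i=-h_0$ into part~(1), one obtains the symmetric identity
\begin{equation*}
l_1(AT_{p_1\dots p_d})=-\sum_{i=0}^{d+1}m_i,\qquad\text{where}\quad \sum_{i=0}^{d+1}h_i=0 .
\end{equation*}
Let $y$ be a vertex at which $h_r$ attains its maximum. Bounding each $m_i$ ($i\ne r$) by $\langle h_i,y\rangle$ and using $\sum_{i\ne r}h_i=-h_r$ yields $l_1(AT_{p_1\dots p_d})\ge \w_{h_r}(T_{p_1\dots p_d})=k+2$, with equality if and only if \emph{every} $h_i$ with $i\ne r$ is minimized at the single vertex $y$. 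Thus the problem collapses to showing that this degenerate ``common optimal vertex'' configuration cannot occur: if it fails, some $m_i$ can be decreased by at least one integer unit, promoting the estimate to $l_1\ge k+3$.

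Ruling out the common optimizer is the main obstacle. I expect to argue first that the maximizing vertex $y$ of $h_r$ must be the apex $v=(p_1,\dots,p_d,1)$, since it is the only vertex whose $d$th coordinate, $p_d$, is large. Demanding that every remaining direction $h_i$ ($i\ne r$) be minimized at $v$ then pins the values $\langle h_i,v\rangle=r_i+c_ip_d$ (where $c_i$ is the $d$th coordinate and $r_i=\sum_{j<d}a_{ij}p_j+a_{i,d+1}$ collects the non-$d$ part) to the bottom of width-$(k+2)$ windows, and I would play these constraints against the subset-sum width bounds of \rl{IntroLemma1}(6): some sum of rows would then be exhibited with width exceeding $k+2$, contradicting the standing assumption. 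It is precisely in carrying out these width estimates---balancing the large term $p_d$ against the aggregate $\alpha=p_1+\cdots+p_{d-1}$ of the remaining coordinates---that the hypothesis $p_d\ge\alpha^2-\alpha$ becomes indispensable, exactly as it was in the proofs of \rl{Lemma1} and \rl{Lemma2}. This width estimate is the genuinely delicate step of the argument.
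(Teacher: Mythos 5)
Your first half is sound, and it is in fact a clean reorganization of the first half of the paper's own argument. The identity $l_1(AT_{p_1\dots p_d})=-\sum_{i=0}^{d+1}m_i$ (with $\sum_{i=0}^{d+1}h_i=0$), combined with Lemma~\ref{L:Lemma1} and Lemma~\ref{L:Lemma2}, correctly yields $l_1(AT_{p_1\dots p_d})\ge k+2$, with equality only in the ``common optimal vertex'' configuration; in the paper's language this is exactly the observation that $L(T_{p_1\dots p_d})$ would have to contain the vertex $(0,\dots,0,k+2)$ of $(k+2)\Delta$ and the origin. But from that point on you only state intentions (``I expect to argue\dots'', ``I would play these constraints against the subset-sum width bounds\dots''). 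Ruling out that configuration is not a final verification --- it is the entire content of the lower bound, so the proposal has a genuine gap precisely at the decisive step.

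Moreover, the mechanism you predict for the contradiction --- exhibiting a sum of rows of width exceeding $k+2$ --- is not how the contradiction actually arises, and there is concrete evidence that your sketch would not go through as described. The paper converts the two forced vertices into geometry: since $L(T_{p_1\dots p_d})$ is a simplex containing both the origin and $(0,\dots,0,k+2)$, these are joined by an edge, so $T_{p_1\dots p_d}$ has an edge of lattice length $k+2$; all its edges are primitive except the one of lattice length $\gcd(p_1,\dots,p_d)$, whence $\gcd(p_1,\dots,p_d)=k+2$. The hypothesis $p_d\ge\alpha^2-\alpha$ then gives the purely arithmetic sandwich $k+2=\gcd(p_1,\dots,p_d)\le\alpha<k+3$, a contradiction unless $\gcd(p_1,\dots,p_d)=\alpha$, which forces $d=2$, $p_1\mid p_2$, $p_1=k+2$, leaving exactly the cases $p_2=p_1^2$ and $p_2=p_1^2-p_1$. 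These boundary cases satisfy all hypotheses of the theorem, so any correct proof must handle them, and there your sketch breaks down concretely: for $p_2=p_1^2$ the unique admissible direction with $d$th coordinate $-1$ is $(p_1-1,-1,p_1-1)$, whose maximum is attained at $e_1$ and $e_3$, \emph{not} at the apex (and after a sign change the apex only ties with $e_d$), contradicting your claim that the common optimizer must be the apex; and the contradiction in that case comes from unimodularity (the width bounds force a row of $A$ to vanish, so $\det A=0$), while for $p_2=p_1^2-p_1$ it comes from lattice geometry ($T_{p_1p_2}$ would contain a lattice triangle with three non-primitive sides). None of these ingredients --- the edge/gcd argument, the reduction to the two exceptional cases, the classification of admissible directions, the determinant and primitivity arguments --- appear in your proposal, and the width estimates you hope to rely on are provably insufficient by themselves in the very cases where the theorem is tight.
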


\begin{proof}
Suppose that there exists a unimodular map $L$ that maps $T_{p_1\dots p_d}$ inside $(k+2)\Delta$ and let $A$ be the corresponding 
unimodular matrix. Then by Lemma~\ref{L:IntroLemma1} for each of its rows  $h$ we have $\w_{h}(T_{p_1\dots p_d})\leq k+2$. We also have the same 
inequality for the sum of any nonempty collection of rows of $A$. By Lemma~\ref{L:Lemma1} each of the entries in the $d$th column of $A$
is 0, 1, or $-1$, and the same applies to the sum of any collection of entries in the  $d$th column of $A$. Hence, up to permutation of rows, the $d$th column
of $A$ is $(0,0,\dots, 0,\pm 1)^T$ or $(0,0,\dots, 0,1, -1)^T$ and by Lemma~\ref{L:IntroLemma2} we can assume that it is the former. Then by Lemma~\ref{L:Lemma2} we have $\w_{e_{d+1}}(L(T_{p_1\dots p_d}))=k+2$.
Since $L(T_{p_1\dots p_d})\subset (k+2)\Delta$ we conclude that 
$$(0,\dots, k+2)\in L(T_{p_1\dots p_d}).$$ 
Similarly, we have $\w_{e_1+\cdots+e_{d+1}}(L(T_{p_1\dots p_d}))=k+2$ and, together 
with $L(T_{p_1\dots p_d})\subset (k+2)\Delta$, this implies that $L(T_{p_1\dots p_d})$ contains the origin. Hence $L(T_{p_1\dots p_d})$ and, therefore, $T_{p_1\dots p_d}$ contains an edge of lattice length 
$k+2$. All the edges of $T_{p_1\dots p_d}$ are primitive, except, possibly, for the one connecting points $(0,\dots,1)$ and $(p_1,\dots,p_d,1)$, whose lattice length is $\gcd(p_1,\dots,p_d)$.
Hence we conclude that $\gcd(p_1,\dots,p_d)=k+2$.   Using the assumption $p_d\geq \alpha^2-\alpha$ we get
$$k+2=\gcd(p_1,\dots, p_d)\leq p_1+\dots+p_{d-1}=\alpha\leq \frac{p_d-2}{\alpha+1}+2<k+3.
$$

Note that since all the $p_i$ are positive, we have $\gcd(p_1,\dots, p_d)< p_1+\dots+p_{d-1}$ unless $d=2$ and $p_2$ is a multiple of of $p_1$.
When the inequality is strict we arrive  at a contradiction since then integer $p_1+\dots+p_{d-1}$ is strictly between the consecutive integers $k+2$ and $k+3$.

It remains to consider the case when $d=2$, $p_2$ is a multiple of $p_1$, and $k+2=\alpha=p_1$.  
We have $\lfloor\frac{p_2-2}{p_1+1}\rfloor=k=p_1-2$ and hence
$$p_2-2=(p_1+1)(p_1-2)+r,
$$
where $0\leq r\leq p_1$. We get  $p_2=p_1^2-p_1+r$ and, since $p_2$ is a multiple of $p_1$, there are two options: $p_2=p_1^2$ and $p_2=p_1^2-p_1$.

Suppose first $p_2=p_1^2$ and let $h=(a_1, a_2,a_3)$ be a direction with $\w_h(T_{p_1p_2})\leq k+2=p_1$.
By Lemma~\ref{L:Lemma1} we have $a_2=0,\pm 1$.  For  $a_2=-1$ we get
$$\max\{a_1,-1,a_3,a_1p_1-p_1^2+a_3\}-\min\{a_1,-1,a_3,a_1p_1-p_1^2+a_3\}\le p_1,
$$
which implies $a_1+1\leq p_1$ and $-a_1p_1+p_1^2\leq p_1$, so we conclude that $a_1=p_1-1$. Plugging in this value for $a_1$ we get
$$\max\{p_1-1,-1,a_3,a_3-p_1\}-\min\{p_1-1,-1,a_3,a_3-p_1\}\le p_1,
$$
which implies $a_3+1\leq p_1$ and $p_1-1-(a_3-p_1)\leq p_1$, so $a_3=p_1-1$. We conclude that for $a_2=-1$ the only $h$ with $\w_h(T_{p_1p_2})\leq p_1$ is 
$h=(p_1-1,-1,p_1-1)$ and for such $h$ we get $\w_h(T_{p_1p_2})=k+2$. Similarly, for $a_2=1$ such direction is $h=(1-p_1,1,1-p_1)$.

Hence  in this case we can only use as rows of $A$ vectors $\pm (p_1-1,-1,p_1-1)$ and vectors whose second component is 0.
Further, we can assume that the second column of $A$ is $(0,0,\pm 1)^T$, and the third row is $\pm (p_1-1,-1,p_1-1)$.
Then the sum of the third row with any of the first two rows will also have to be of the same form as the third row, which would imply $\det A=0$.

We next consider the last case $p_2=p_1^2-p_1$. Recall that we have $k+2=p_1$. By Lemma~\ref{L:Lemma1} and Lemma~\ref{L:Lemma2} if $\w_h(T_{p_1p_2})\leq p_1$ for 
$h=(a_1,a_2,a_3)$ then $a_2=0,\pm 1$ and if $a_2=\pm 1$ we have $\w_h(T_{p_1p_2})= p_1$. Let's further investigate the case $a_2=0$. We have 
$$\max\{a_1,0,a_3,a_1p_1+a_3\}-\min\{a_1,0,a_3,a_1p_1+a_3\}\le p_1,
$$
where we can assume $a_1\geq 0$. Hence $a_1p_1\leq p_1$, which implies $a_1=0$ or $1$ and in the latter case the width is $p_1$. We conclude that the only direction $h$ with $\w_h(T_{p_1p_2})< p_1$ is 
$(0,0,\pm 1)$.

As before, we can assume that the second column in matrix $A$ is $(0,0,1)^T$. Denote the rows of $A$ by $h_1,h_2$, and $h_3$.
Then out of the widths of $T_{p_1p_2}$ in the direction of  $h_1$ and $h_2$ only one can be strictly less than $p_1$.
We can assume that this happens in the direction of $h_1$. We also know that the width of $T_{p_1p_2}$ in the direction of $h_3$ and $h_1+h_2+h_3$
is $p_1$. We can now conclude that $L(T_{p_1p_2})$ contains the triangle 
$$\conv\{(0,0,0), (0,p_1,0), (0,0,p_1)\}.
$$
Note that we assumed that $p_d\geq 2$, which implies $p_1^2-p_1=p_2\geq 2$ and hence $p_1\geq 2$. Hence our conclusion implies  that  $T_{p_1p_2}$ contains a lattice triangle with three 
non-primitive sides, and this contradiction completes the argument.
\end{proof}

We next use the above work  to compute $\lss(T_{p_1,\dots, p_d})$.

\begin{Theorem}\label{T:Main2}  Let $\alpha=p_1+\cdots+p_{d-1}$, where all $p_i$ are positive and $p_d\geq 2$. Define $k=\lfloor\frac{p_d-2}{\alpha+1}\rfloor$. Suppose that $p_d\geq \alpha^2-\alpha$. Then $\lss(T_{p_1\dots p_d})=k+2$.
\end{Theorem}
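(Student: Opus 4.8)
The plan is to relate the cube lattice size $\lss$ to the simplex lattice size $\ls$ computed in Theorem~\ref{T:Main1}, exploiting the elementary fact that the $l_\infty$-type width measurement defining $\lss$ differs from the $l_1$-type measurement defining $\ls$ in a controlled way. The key observation is that $l_1(P)$ is governed by the simultaneous behavior of the widths in the coordinate directions $e_1,\dots,e_d$ \emph{together with} their sum $e_1+\cdots+e_d$, whereas the cube counterpart only sees the individual coordinate widths $\w_{e_i}(P)$. Concretely, the analogue of $l_1$ for the cube is $\max_i \w_{e_i}(P)$, and $\lss(P)$ is the minimum of this quantity over unimodular images. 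So I would first record this dictionary, observe that for any matrix $A$ the cube-analogue quantity is $\max_i \w_{h_i}(P)$ where $h_i$ are the rows of $A$, and then reuse the very same matrices and width estimates from the proofs of Proposition~\ref{P: upper_bound} and Theorem~\ref{T:Main1}.

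For the \textbf{upper bound} $\lss(T_{p_1\dots p_d})\leq k+2$, I would apply the exact matrix $A$ constructed in Proposition~\ref{P: upper_bound}. From the computation there, the coordinate widths of $AT_{p_1\dots p_d}$ are the spreads of the individual rows, and the arguments already carried out show $\max\{k+2,\,p_d-\alpha(k+1),\,\alpha\}=k+2$ while each coordinate minimum is $-1$ or handled within the same bounds; the point is that each individual $\w_{e_i}(AT_{p_1\dots p_d})\leq k+2$, since the single problematic term that pushed $l_1$ up to $k+3$ was the \emph{sum} $h_1+\cdots+h_d$, which the cube measurement ignores. So the bound drops by exactly one, giving $k+2$.

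For the \textbf{lower bound} $\lss(T_{p_1\dots p_d})\geq k+2$, I would argue that $\lss\geq\w(P)$ would be far too weak, so instead I use that $\lss(P)\geq\w_h(P)$ for any row direction of the optimal matrix, combined with Lemma~\ref{L:Lemma1} and Lemma~\ref{L:Lemma2}. Suppose some unimodular $A$ achieves $\max_i\w_{h_i}(T_{p_1\dots p_d})\leq k+1$. Then every row $h_i$ satisfies $\w_{h_i}\leq k+1\leq k+2$, so Lemma~\ref{L:Lemma1} forces each $d$th-column entry into $\{0,\pm1\}$; but now, crucially, Lemma~\ref{L:Lemma2} says that a row with $a_d=\pm1$ has width \emph{exactly} $k+2>k+1$, a contradiction. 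Hence every row has $a_d=0$, forcing the $d$th column of $A$ to be identically zero and contradicting $\det A=\pm1$. This is cleaner than the simplex case precisely because the cube measurement does not involve the row sums, so the delicate $\gcd$ and boundary analysis of Theorem~\ref{T:Main1} (the $d=2$ cases) is \emph{not} needed here.

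The \textbf{main obstacle} I anticipate is setting up the cube dictionary correctly: I must verify that $\lss(P)$ equals the minimum over $A\in\mathrm{GL}(d,\Z)$ (plus translation) of $\max_i\w_{h_i}(P)$, i.e. that measuring the cube size reduces to controlling the individual row widths rather than a row sum. Once that reduction is in place, the whole argument becomes a direct and slightly easier variant of the simplex proof, reusing Lemmas~\ref{L:IntroLemma1}, \ref{L:Lemma1}, and \ref{L:Lemma2} verbatim; the subtlety is only in confirming that the optimal cube transformation can be taken with the $d$th column in the normalized form $(0,\dots,0,\pm1)^T$ before invoking Lemma~\ref{L:Lemma2}, which follows from Lemma~\ref{L:IntroLemma2} as in Theorem~\ref{T:Main1}.
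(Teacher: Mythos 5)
Your proposal is correct and takes essentially the same route as the paper's own proof: the upper bound reuses the matrix $A$ from Proposition~\ref{P: upper_bound}, whose individual coordinate widths are at most $k+2$ (using $\alpha\leq k+2$), and the lower bound applies Lemmas~\ref{L:Lemma1} and~\ref{L:Lemma2} to force the $d$th entry of every row of a putative matrix to vanish, contradicting $\det A=\pm 1$. The only superfluous point is your final worry about normalizing the $d$th column via Lemma~\ref{L:IntroLemma2}: your lower-bound argument never needs that normalization, since Lemma~\ref{L:Lemma2} is applied to each row separately.
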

\begin{proof}
Let $A$ be the matrix from Proposition~\ref{P: upper_bound}. Then $\w_{e_{d}}(AT_{p_1\dots p_d})=1$, $\w_{e_i}(AT_{p_1\dots p_d})=p_i$ for $i=1,\dots, d-1$, 
and 
$$\w_{e_{d+1}}(AT_{p_1\dots p_d})=\max\{k+1,p_d-\alpha(k+1)-1\}-\min\{-1,p_d-\alpha(k+1)-1\}=k+2,
$$
as shown in the proof of Proposition~\ref{P: upper_bound}. It is also checked there that $\alpha\leq k+2$, and hence after a translation by the vector $v=(0,\dots,0,1)^T$ we get
$AT_{p_1\dots p_d}+v\subset  [0,k+2]^{d+1}$, which implies that $\lss(P)\leq k+2$. 

Suppose next that there exists a unimodular map $L\colon\R^d\to\R^d$ such that $L(T_{p_1\dots p_d})\subset  [0,k+1]^{d+1}$. Then the width of $T_{p_1\dots p_d}$ in the direction of each of the rows of the corresponding matrix $A$ is at most $k+1$, but by Lemmas~\ref{L:Lemma1} and ~\ref{L:Lemma2} this implies that the $d$th entry of each row of $A$ is zero, so $\det A=0$.

\end{proof}

Note that we have checked in Theorems~\ref{T:Main1} and~\ref{T:Main2} that there exists  matrix $A$ that computes both $\ls(T_{p_1\dots p_d})$ and $\lss(T_{p_1\dots p_d})$. While it was shown in~\cite{HarSopr, HarSoprTier} that this is always the case when $P\subset\R^2$, a counterexample for  $P\subset\R^{3}$ was provided in~\cite{HarSopr}.

\subsection*{Acknowledgments} 
We are grateful to the Kent State REU program for the hospitality. We would also like to thank the anonymous referee for useful suggestions.

\end{document}